\theoremstyle{plain}
\newtheorem{thm}{Theorem}[section]
\newtheorem{prop}[thm]{Proposition}
\newtheorem{defn}[thm]{Definition}
\newtheorem{que}[thm]{Question}
\theoremstyle{remark}
\numberwithin{equation}{section}
\newcommand{\vol}{\mathrm{vol}}
\newcommand{\nd}{\mathrm{nd}}
\newcommand{\de}{\partial}
\newcommand{\R}{\mathbb{R}}
\renewcommand{\leq}{\leqslant}
\renewcommand{\geq}{\geqslant}
\renewcommand{\le}{\leqslant}
\begin{document}

\numberwithin{equation}{section}

\title{Volume of perturbations of pseudoeffective classes}
\begin{abstract} In this short note, we consider the question of determining the asymptotics of the volume function near the boundary of the pseudoeffective cone on compact K\"ahler manifolds. We solve the question in a number of cases -- in particular, we show that the volume function behaves polynomially under small perturbations near pseudoeffective classes with numerical dimension zero.
\end{abstract}
\author[N. McCleerey]{Nicholas McCleerey}
\email{njm2@math.northwestern.edu}
\address{Department of Mathematics, Northwestern University, 2033 Sheridan Road, Evanston, IL 60208}

\thanks{Partially supported by NSF RTG grant DMS-1502632.}

\maketitle
\section{Introduction}
Let $(X^n,\omega)$ be a compact K\"ahler manifold, and $\alpha$ a closed real $(1,1)$ form on $X$, whose cohomology class $[\alpha]$ is pseudoeffective, i.e. it contains some closed positive $(1,1)$ current. The set of all such classes form a closed cone $\mathcal E(X)\subset H^{1,1}(X,\R)$ called the pseudoeffective cone, and it is known that the volume function,
\[
\vol(\alpha) := \sup_{0\leq T\in[\alpha]} \int_X T_{\mathrm{ac}}^n
\]
as defined for cohomology classes in \cite{Bo}, has the property that $\mathrm{vol}(\alpha) > 0$ if and only if $\alpha$ is in the interior of $\mathcal E(X)$, in which case we say that $[\alpha]$ is a big class \cite{Bo}. Furthermore, the volume function is continuous on all of $\mathcal E(X)$.

When $X$ is projective and $[\alpha]=c_1(L)$ for some holomorphic line bundle $L$, then Boucksom \cite{Bo} showed that
$$\vol(c_1(L))=\lim_{m\to\infty}\frac{h^0(X,L^m)}{m^n/n!}=:\vol(L),$$
namely the volume agrees with the algebraic definition (see Lazarsfeld's monograph \cite{Laz} for more on the volume of line bundles).

In this paper, we would like to investigate the asyptotics of the volume function near the boundary of the pseudoeffective cone. More precisely, if $[\alpha]\in\partial\mathcal E(X)$ (which we shall assume from now on), we would like to study the behavior of
\[
\mathrm{vol}(\alpha + t\omega)
\]
as $t > 0$ tends to zero. As mentioned above, the fact that $[\alpha]\in\partial\mathcal E(X)$ implies that $\mathrm{vol}(\alpha + t\omega)\to 0$ as $t\searrow 0$, and we would like to understand the rate at which this approaches zero. For example, if $[\alpha]$ is nef (i.e. a limit of K\"ahler classes), then Boucksom \cite{Bo} showed that for all $t\geq 0$ we have
\begin{equation}\label{nientesaleinzucca}
\vol(\alpha+t\omega)=\int_X(\alpha+t\omega)^n=t^{n-k}\binom{n}{k}\int_X\alpha^k\wedge\omega^{n-k}+O(t^{n-k+1}),
\end{equation}
where $0\leq k<n$ is the largest nonnegative integer such that $\int_X\alpha^k\wedge\omega^{n-k}\neq 0$ (or equivalently, such that $[\alpha^k]\neq 0$ in $H^{k,k}(X,\mathbb{R})$). This integer, denoted by $\nd(\alpha)$, is called the {\em numerical dimension} of the nef class $[\alpha]$, and \eqref{nientesaleinzucca} shows that $\vol(\alpha+t\omega)\sim t^{n-\nd(\alpha)}$ when $[\alpha]$ is nef.

When $[\alpha]$ is merely pseudoeffective, there are a number of natural notions of numerical dimension of $[\alpha]$, starting from the algebraic work of Nakayama \cite{Nak} and of Boucksom-Demailly-P\u{a}un-Peternell \cite{BDPP} on K\"ahler manifolds, and several inequalities relating them were proved by Lehmann \cite{Le} and Eckl \cite{Ec}. We will consider one such notion, introduced in \cite{BDPP}, which is the direct analog of what happens in the nef case, namely
$$\nd(\alpha):=\max\{k\in\mathbb{N}\ |\ \langle\alpha^k\rangle\neq 0 \textrm{ in }H^{k,k}(X,\mathbb{R})\},$$
where $\langle \alpha^k \rangle$ is the positive intersection product of Boucksom \cite{BDPP} (see also \cite{BEGZ} in the transcendental case). When $[\alpha]$ is nef we have $\langle\alpha^k\rangle=[\alpha^k]$, so this is consistent with the definition in the nef case. Also, we have that $\int_X\langle\alpha^n\rangle=\vol(\alpha)$, so if $[\alpha]\in\de\mathcal E(X)$ then we have $0\leq \nd(\alpha)<n$.

Insipired by what happens in the nef case, we first study the following question:
\begin{que}\label{con}
Let $(X^n,\omega)$ be a compact K\"ahler manifold and $[\alpha]$ a pseudoeffective $(1,1)$ class with $\vol(\alpha)=0$.
Do we have that
$$\vol(\alpha+t\omega)=O(t^{n-\nd(\alpha)}),$$
as $t>0$ approaches zero?
\end{que}

As mentioned above, the answer is affirmative if $[\alpha]$ is nef. Not surprisingly, the answer is also affirmative when $n=2$, see Proposition \ref{surfaces} below. Our main result is the following:

\begin{thm}\label{main}
Question \ref{con} has an affirmative answer if either:
\begin{itemize}
\item[(a)] $\nd(\alpha)=n-1$, and the volume function $\vol(\cdot)$ is differentiable in the big cone, or
\item[(b)] $\nd(\alpha)=0$.
\end{itemize}
\end{thm}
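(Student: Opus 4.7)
The common tool is the derivative formula of Boucksom--Favre--Jonsson,
\[
\frac{d}{dt}\vol(\alpha+t\omega) = n\int_X\langle(\alpha+t\omega)^{n-1}\rangle\wedge\omega,
\]
valid whenever $\vol$ is differentiable on the big cone. Since $\alpha\in\de\mathcal E(X)$ implies $\vol(\alpha)=0$ by continuity, integrating this formula converts bounds on its right-hand side into bounds on $\vol(\alpha+t\omega)$.

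\textbf{Part (a).} Under the differentiability assumption, $f(t):=\vol(\alpha+t\omega)$ is $C^1$ on $(0,\infty)$ with $f'(t)=n\int_X\langle(\alpha+t\omega)^{n-1}\rangle\wedge\omega$. The upper semicontinuity of the positive intersection product on $\mathcal E(X)$ (applied to the pairing against the fixed positive form $\omega$) yields
\[
\limsup_{t\to 0^+}f'(t)\;\le\; n\int_X\langle\alpha^{n-1}\rangle\wedge\omega \;<\;\infty,
\]
so $f'\le C$ on some $(0,\varepsilon)$. Integrating against $f(0)=0$ gives $f(t)\le Ct = O(t^{n-\nd(\alpha)})$, as required.

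\textbf{Part (b).} Here $\nd(\alpha)=0$ forces $\langle\alpha\rangle=0$, and by Boucksom's structure theorem $\alpha$ is represented by an effective $\R$-divisor $\sum_i a_i D_i$. For $t>0$ small, write the divisorial Zariski decomposition $\alpha+t\omega=P_t+N_t$ with $N_t=\sum_i b_i(t)D_i$ and $P_t$ modified nef; then $\vol(\alpha+t\omega)=\vol(P_t)$ and $P_t = t\omega + \sum_i(a_i-b_i(t))D_i$. The goal is to show the quantitative estimate $a_i-b_i(t)=O(t)$, so that $P_t = t\gamma_t$ for some $\gamma_t$ bounded in $\mathcal E(X)$; the $n$-homogeneity and continuity of $\vol$ then yield $\vol(P_t)=t^n\vol(\gamma_t)=O(t^n)$.

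\textbf{Main obstacle.} The key difficulty is the linear rate $a_i-b_i(t)=O(t)$. The natural approach is Boucksom's orthogonality relation $\langle P_t^{n-1}\rangle\cdot D_i=0$: since $P_t$ is modified nef, one can (at least up to controlled error) replace the positive intersection by the classical one and expand $P_t^{n-1}$ multilinearly. Extracting the leading-order contribution yields the linear system
\[
(n-1)\sum_j (a_j-b_j(t))\,(\omega^{n-2}\cdot D_j\cdot D_i) \;=\; -\,t\,\omega^{n-1}\cdot D_i + O(t^2),
\]
whose coefficient matrix is the intersection form of the $D_i$'s paired with $\omega^{n-2}$. This form is negative definite because the $D_i$ support the negative part of a Zariski decomposition, so the system is invertible and gives the desired $O(t)$ bound on each $a_i-b_i(t)$. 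The technical care lies in justifying the multilinear expansion of the orthogonality relation in the transcendental setting and controlling the discrepancy between $\langle P_t^{n-1}\rangle$ and $P_t^{n-1}$ induced by the modified-nef (rather than nef) nature of $P_t$.
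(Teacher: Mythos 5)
Your part (a) is correct and is essentially the paper's own argument (Proposition \ref{Diff}): the Boucksom--Favre--Jonsson derivative formula plus a uniform bound on $\langle(\alpha+t\omega)^{n-1}\rangle\cdot\omega$ near $t=0$ (the paper gets this from monotonicity of the positive product rather than semicontinuity, but either works), integrated from $\vol(\alpha)=0$, gives $O(t)=O(t^{n-\nd(\alpha)})$ when $\nd(\alpha)=n-1$.

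Part (b) has a genuine gap, and it sits exactly at the step you flag as the ``main obstacle.'' Your plan transplants the surface proof (the paper's Proposition \ref{surfaces}) to higher dimensions, and each of its three ingredients is unavailable or fails there. First, the orthogonality relation $\langle P_t^{n-1}\rangle\cdot N_t=0$ for the divisorial Zariski decomposition is not known on a general compact K\"ahler manifold: it is tied to the differentiability/orthogonality conjecture of \cite{BDPP}, established only in the projective case, whereas part (b) carries no projectivity or differentiability hypothesis. Second, replacing $\langle P_t^{n-1}\rangle$ by the classical cup product $P_t^{n-1}$ up to an $O(t^2)$ error is precisely what fails in general: $P_t$ is only modified nef, and your argument uses nothing about $\nd(\alpha)=0$ beyond $[\alpha]=[D]$ effective; if such a multilinear expansion were valid it would force integer-power asymptotics for every pseudoeffective class, contradicting Lesieutre's example with $\vol(\alpha+t\omega)\sim t^{3/2}$ cited in the paper. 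Third, negative definiteness of the Gram matrix $(\omega^{n-2}\cdot D_i\cdot D_j)$ for the components of a negative part is Zariski's lemma, a surface phenomenon; in higher dimensions Boucksom's theory only gives linear independence of an exceptional family, and the Hodge index theorem alone does not exclude the span of the $D_i$ from meeting the positive cone of the form $\omega^{n-2}\cdot(\cdot)\cdot(\cdot)$. The paper's actual proof of (b) is entirely different and stronger: it shows $\mathrm{Supp}(D)\subseteq E_{nK}([D]+t\beta)$ for all small $t>0$ (by identifying $\mathrm{Supp}(D)$ with the non-nef locus $E_{nn}(D)=\bigcup_{t>0}E_{nK}([D]+t\omega)$ and observing that these loci stabilize), and then invokes the theorem of Di Nezza--Floris--Trapani that adding a class supported in the non-K\"ahler locus does not change the volume; this makes $\vol(\tfrac{1}{t}[D]+\beta)$ \emph{constant} for small $t$, hence $\vol([D]+t\beta)=Ct^n$ exactly, a rigidity your perturbative scheme would not yield even if its gaps were closed.
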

In the first item (a), let us remark that differentiability of the volume function is known to hold on projective manifolds by \cite{WN,BDPP,BFJ}, and is conjectured to be true on arbitrary compact K\"ahler manifolds \cite{BDPP}.

However, starting from $n=3$, counterexamples to Question \ref{con} were very recently constructed by Lesieutre in \cite{Les}. Specifically, he constructs a Calabi-Yau $3$-fold $X$ with a class $[\alpha]$ as above such that $\nd(\alpha)=1$ and
$$\vol(\alpha+t\omega)=O(t^{\frac{3}{2}}).$$
This naturally raises the question whether there is a potentially new notion of numerical dimension coming from the asymptotics of the volume function, and we discuss this briefly in Section \ref{end}.

This paper is organized as follows. In the next section, we start with some simple initial observations, then prove item (a) in Theorem \ref{main}, and also the fact that Question \ref{con} has an affirmative answer on surfaces. In section \ref{zero} we then deal with item (b), when the numerical dimension is zero, and show that the volume function is actually polynomial along small perturbations near such classes. Finally, in Section \ref{end} we briefly discuss a possible direction of further inquiry concerning the recent paper \cite{Les}.\newline

\noindent
{\bf Acknowledgments. } I would like to thank John Lesieutre for sharing his recent preprint \cite{Les} with me while preparing this note and for his many helpful suggestions regarding it, S\'ebastien Boucksom for the proof of Proposition \ref{surfaces}, and Jean-Pierre Demailly, Rob Lazarsfeld, David Witt Nystr\"om and Jian Xiao for related discussions. I would also like to thank my advisor Valentino Tosatti for his continued patience and guidance.

\section{Preliminary Observations}
Throughout this paper, $(X^n,\omega)$ will be a compact K\"ahler manifold, and $\alpha$ a closed real $(1,1)$ form on $X$ whose cohomology class $[\alpha]\in H^{1,1}(X,\mathbb{R})$ is pseudoeffective, but not big, so that $\vol(\alpha) = 0$. Monotonicity and homogeneity of the volume then imply immediately that one always has the following lower bound:
\[
\vol(\alpha + t\omega) \geq \vol(\omega)t^n = \left(\int_X\omega^n\right)\ t^n.
\]
Moreover, if we assume that the volume function $\vol(\cdot)$ is differentiable in the big cone (which is satisfied on all projective manifolds by \cite{WN,BDPP,BFJ} and conjectured to be always true \cite{BDPP}), one always has the following upper-bound:
\begin{prop}\label{Diff}
Let $(X^n,\omega)$ be a compact K\"ahler manifold and $[\alpha]$ a pseudoeffective $(1,1)$ class with $\vol(\alpha)=0$.
Assume that $\vol(\cdot)$ is differentiable on the big cone. Then there exists $C>0$ such that
\[
 \vol(\alpha + t\omega) \leq C t,
\]
for all $t\geq 0$ sufficiently small.
\end{prop}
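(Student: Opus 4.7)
The plan is to use the differentiability assumption together with the derivative formula for $\vol$ in terms of positive products, plus a monotonicity property that bounds this derivative uniformly as we approach $\alpha$.

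First, for any $s>0$, the class $[\alpha]+s[\omega]$ lies in the big cone, since a pseudoeffective class plus a K\"ahler class is always big. Setting $f(s):=\vol(\alpha+s\omega)$, the assumed differentiability of $\vol$ on the big cone combined with the explicit derivative formula from \cite{WN,BDPP,BFJ} gives that $f$ is differentiable on $(0,\infty)$ with
\[
f'(s)=n\int_X \langle(\alpha+s\omega)^{n-1}\rangle\cdot\omega.
\]

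The key observation is that $f'$ is non-decreasing in $s$. Indeed, if $\beta\le \beta'$ are pseudoeffective classes, then the positive product is monotone in each entry, so $\langle \beta^{n-1}\rangle\le \langle (\beta')^{n-1}\rangle$ as pseudoeffective classes on $X$. Pairing with the K\"ahler class $[\omega]$ preserves inequalities between pseudoeffective classes (a pseudoeffective class paired with $\omega$ is nonnegative), so applying this with $\beta=\alpha+s\omega$ and $\beta'=\alpha+s'\omega$ for $s\le s'$ gives $f'(s)\le f'(s')$. In particular, for all $s\in(0,1]$,
\[
f'(s)\le f'(1)= n\int_X \langle(\alpha+\omega)^{n-1}\rangle\cdot\omega =: C<\infty,
\]
since $\alpha+\omega$ is big and its positive product is a well-defined pseudoeffective class.

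Finally, to conclude, I apply the mean value theorem on any subinterval $[\varepsilon,t]\subset(0,1]$, which gives $f(t)-f(\varepsilon)\le C(t-\varepsilon)$. Since $\vol$ is continuous on the pseudoeffective cone and $\vol(\alpha)=0$, letting $\varepsilon\to 0^+$ yields $f(t)\le Ct$, as desired. The proof is short; the only thing one might worry about is whether the derivative formula truly applies at $\alpha+s\omega$ for arbitrarily small $s>0$, but once $s>0$ the class is genuinely in the big cone so this is unproblematic. The whole argument rests on the interplay between differentiability of volume, the positive-product formula for its derivative, and the order-preserving property of the positive product.
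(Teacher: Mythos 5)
Your proposal is correct and follows essentially the same route as the paper: the derivative formula $\frac{d}{ds}\vol(\alpha+s\omega)=n\langle(\alpha+s\omega)^{n-1}\rangle\cdot\omega$ from \cite{BFJ}, the monotonicity of the positive product to bound the derivative by its value at $s=1$, and integration down to $t=0$ using $\vol(\alpha)=0$. The only cosmetic difference is that you invoke the mean value theorem on $[\varepsilon,t]$ and let $\varepsilon\to 0^+$, whereas the paper applies the fundamental theorem of calculus directly on $[0,t]$.
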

\begin{proof}
Thanks to Boucksom-Favre-Jonsson \cite{BFJ}, the assumption of differentiability of $\vol(\cdot)$ implies that
$$\frac{d}{dt}\bigg|_{t=t_0}\vol(\alpha+t\omega)=n\langle (\alpha + t_0\omega)^{n-1}\rangle \cdot\omega.$$
Using this together with the fundamental theorem of calculus and the monotonicity of the positive intersection product, we have:
\[
 \vol(\alpha + t\omega) - \vol(\alpha) = n\int_{0}^t \langle(\alpha + s\omega)^{n-1}\rangle \cdot \omega\ ds\leq n\left(\langle (\alpha + \omega)^{n-1}\rangle \cdot\omega\right) t.
\]
Hence, since $\vol(\alpha) = 0$, we get:
\[
 \vol(\alpha + t\omega) \leq C t,
\]
for all $0 < t \leq 1$.
\end{proof}
Recall that the numerical dimension of a pseudoeffective $(1,1)$ class is defined by \cite{BDPP} to be:
\begin{defn}
The numerical dimension of a pseudoeffective $(1,1)$ class $[\alpha]$ is defined to be
$$\nd(\alpha):=\max\{p\in\mathbb{N}\ |\ \langle\alpha^p\rangle\neq 0 \textrm{ in }H^{p,p}(X,\mathbb{R})\},$$
where $\langle \alpha^p \rangle$ is the positive intersection product of Boucksom \cite{BDPP,BEGZ}. In particular, if $[\alpha]$ is not big then we have $0\leq \nd(\alpha)\leq n-1$.
\end{defn}
It has then been established in \cite{Le} that the lower bound for the volume is actually directly related to $\nd(\alpha)$. We reproduce the following short proposition verbatim from \cite[Theorem 6.2]{Le} (noting that it applies to general compact K\"ahler manifolds and $(1,1)$ classes), for the reader's convenience:

\begin{prop}\label{Leh}
We have that
$$\nd(\alpha)=\max\{p\in\mathbb{N}\ |\ \exists c>0 \textrm{ such that }\vol(\alpha+t\omega)\geq c t^{n-p}, \textrm{ for all }t>0\}.$$
\end{prop}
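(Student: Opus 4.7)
Write $p_0 := \nd(\alpha)$ and set $f_k(t) := \langle (\alpha + t\omega)^k \rangle \cdot \omega^{n-k}$ for $t \geq 0$ and $0 \leq k \leq n$, so that in particular $f_n(t) = \vol(\alpha + t\omega)$. Since a nonzero pseudoeffective $(p,p)$-class pairs strictly positively with the Kähler power $\omega^{n-p}$, the very definition of $\nd(\alpha)$ gives $f_{p_0}(0) > 0$ while $f_k(0) = 0$ for every $k > p_0$. The equality splits into showing $p_0$ belongs to the set on the right-hand side, and that $p_0 + 1$ does not.

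For the first (lower bound) direction, I would invoke super-additivity of the positive intersection product to get
\[
\langle (\alpha + t\omega)^n \rangle \geq \binom{n}{p_0}\, t^{n - p_0}\, \langle \alpha^{p_0}\rangle \cdot \omega^{n-p_0}
\]
as pseudoeffective classes in $H^{n,n}(X, \R)$. Integrating over $X$ and using $f_{p_0}(0) > 0$ then gives $\vol(\alpha + t\omega) \geq c\, t^{n-p_0}$ for all $t > 0$, with $c = \binom{n}{p_0}\, f_{p_0}(0) > 0$.

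For the second (upper bound) direction, I would establish the matching asymptotic $\vol(\alpha + t\omega) = O(t^{n - p_0})$, which is $o(t^{n - p_0 - 1})$ and so excludes any uniform lower bound of that order. The key input is the derivative identity $f_k'(t) = k\, f_{k-1}(t)$ for $t > 0$, valid on the big cone as a consequence of the differentiability of positive intersection products. Combined with continuity up to the boundary and $f_k(0) = 0$ for $k > p_0$, this yields $f_k(t) = k \int_0^t f_{k-1}(s)\, ds$ for each $k > p_0$. Iterating from $k = p_0 + 1$ up to $k = n$, with the base case that $f_{p_0}$ is bounded near $0$ by continuity, a simple induction on $k$ shows $f_k(t) = O(t^{k - p_0})$ for every $k > p_0$; at $k = n$ this yields $\vol(\alpha + t\omega) \leq C\, t^{n - p_0}$ for small $t > 0$, as desired.

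The main obstacle is justifying the derivative identity $f_k'(t) = k\, f_{k-1}(t)$ on the big cone in the Kähler setting: for $k = n$ (differentiability of $\vol$) this is the content of \cite{BFJ} in the projective case and follows in the Kähler case via the positive intersection machinery of \cite{BEGZ}, and the analogous statements for $k < n$ follow similarly from the differentiability of mixed positive intersections.
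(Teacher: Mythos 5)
Your lower-bound direction is correct and is essentially the paper's own argument: both reduce to
\[
\vol(\alpha+t\omega)=\langle(\alpha+t\omega)^n\rangle\ \geq\ t^{n-p_0}\,\langle(\alpha+t\omega)^{p_0}\rangle\cdot\omega^{n-p_0}\ \geq\ t^{n-p_0}\,\langle\alpha^{p_0}\rangle\cdot\omega^{n-p_0},
\]
via monotonicity (you use superadditivity, which gives the same thing up to the binomial constant), together with the fact that a nonzero strongly positive class pairs strictly positively with $\omega^{n-p_0}$.

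The second half of your argument has a fatal gap. The asymptotic you propose to establish, $\vol(\alpha+t\omega)=O(t^{n-\nd(\alpha)})$, is word for word the assertion of Question 1.1 of this paper, and the paper's main point is that this assertion is \emph{false} in general: Lesieutre's example \cite{Les} has $n=3$, $\nd(\alpha)=1$ and $\vol(\alpha+t\omega)\sim t^{3/2}$, which is not $O(t^{2})$. Consequently your key input, the derivative identity $f_k'(t)=k\,f_{k-1}(t)$ for $k<n$, cannot be salvaged: the theorem of \cite{BFJ} concerns only the top-degree product $\langle\beta^n\rangle$ (the case $k=n$), there is no analogous differentiability statement for the mixed products $\langle\beta^k\rangle\cdot\omega^{n-k}$ in $\beta$, and iterating your identity would prove the false $O(t^{n-\nd(\alpha)})$ bound in complete generality. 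One already sees the failure on a surface: writing $P_t=P+t\omega+\sum_i a_i(t)E_i$ for the positive part of the Zariski decomposition of $\alpha+t\omega$ as in the proof of Proposition 2.4, one gets $\frac{d}{dt}\bigl(\langle\alpha+t\omega\rangle\cdot\omega\bigr)=\int_X\omega^2+\sum_i a_i'(t)\,E_i\cdot\omega$, which exceeds $f_0(t)=\int_X\omega^2$ whenever the negative part is nonempty. Note also that what you actually need is far weaker than an $O(t^{n-p_0})$ bound: only that no uniform lower bound $\vol(\alpha+t\omega)\geq c\,t^{n-p_0-1}$ can hold, i.e.\ that $\liminf_{t\searrow 0}\vol(\alpha+t\omega)/t^{n-p_0-1}=0$. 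The paper's own second step is of a completely different, soft nature and makes no claim about decay rates: taking $p$ to be the maximum on the right-hand side, the failure of the bound at exponent $n-p-1$ produces, for every $c>0$, some $t$ with $\langle(\alpha+t\omega)^{p+1}\rangle\cdot\omega^{n-p-1}<c$; monotonicity in $t$ then forces $\langle\alpha^{p+1}\rangle\cdot\omega^{n-p-1}=0$ and hence $\langle\alpha^{p+1}\rangle=0$. You should replace your second step by an argument that targets only the weaker $\liminf$ statement (or, as in the paper and in Lehmann's Theorem 6.2, works with the maximum of the right-hand side directly), rather than routing through an upper bound that the paper itself disproves.
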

\begin{proof}
For any $p\geq 0$ we have
$$t^{n-p}\langle(\alpha+t\omega)^p\rangle \cdot \omega^{n-p}=\langle (\alpha+t\omega)^p\cdot (t\omega)^{n-p}\rangle\leq \langle (\alpha+t\omega)^n\rangle=\vol(\alpha+t\omega),$$
and so we conclude that $\vol(\alpha+t\omega)\geq ct^{n-\nd(\alpha)}$ for some $c>0$ and all $t>0$. This shows that
$$\nd(\alpha)\geq\max\{p\in\mathbb{N}\ |\ \exists c>0 \textrm{ such that }\vol(\alpha+t\omega)\geq c t^{n-p}, \textrm{ for all }t>0\}.$$
Conversely, if $p$ is the maximum on the RHS, then for every $c>0$ there is some $t>0$ such that $\vol(\alpha+t\omega)<ct^{n-p-1},$ and so for this value of $t$ we have
$$ct^{n-p-1}>\vol(\alpha+t\omega)\geq t^{n-p-1}\langle(\alpha+t\omega)^{p+1}\rangle \cdot \omega^{n-p-1},$$
i.e. $$\langle(\alpha+t\omega)^{p+1}\rangle \cdot \omega^{n-p-1}<c.$$
But the LHS of this is increasing in $t$, and so this inequality holds for all $t>0$ sufficiently small, and letting $t$ tend to zero gives
$$\langle \alpha^{p+1}\rangle \cdot \omega^{n-p-1}<c.$$
Since $c>0$ is arbitrary, we conclude that
$$\langle \alpha^{p+1}\rangle \cdot \omega^{n-p-1}=0,$$
and since $\langle \alpha^{p+1}\rangle$ is represented by a strongly positive $(p+1,p+1)$ current, we conclude that
$\langle\alpha^{p+1}\rangle=0$ in cohomology, as required.
\end{proof}

By combining Propositions \ref{Diff} and \ref{Leh}, we immediately deduce item (a) of our main theorem \ref{main}.

Next, we show that Question \ref{con} has an affirmative answer of all (not necessarily projective) surfaces. The following proof was communicated to us by S. Boucksom:

\begin{prop}\label{surfaces}
Question \ref{con} has an affirmative answer when $n=2$.
\end{prop}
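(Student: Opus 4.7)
Since $\vol(\alpha)=0$ forces $\nd(\alpha)\in\{0,1\}$, I would split the argument into these two cases.

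For $\nd(\alpha)=n-1=1$, my plan is simply to apply item (a) of Theorem \ref{main}. The only additional input required is the differentiability of $\vol(\cdot)$ on the big cone of a compact K\"ahler surface. This follows from the divisorial Zariski decomposition on surfaces: for every big class $\beta$ one has $\vol(\beta)=Z(\beta)^2$, where $Z(\beta)$ is the nef positive part, and $Z(\cdot)$ is piecewise linear on a locally finite family of Zariski chambers. Hence $\vol(\cdot)$ is piecewise polynomial of degree two and in particular $C^1$, so Proposition \ref{Diff} applies.

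For $\nd(\alpha)=0$, I would proceed via Zariski decomposition and the Hodge index theorem. For each small $t>0$, write
\[
\alpha+t\omega=Z_t+N_t,
\]
with $Z_t$ nef, $N_t$ an effective combination of curves with negative definite intersection matrix, and $Z_t\cdot N_t=0$, so that $\vol(\alpha+t\omega)=Z_t^2$. The Hodge index theorem on the surface $X$ gives the bound
\[
Z_t^2\le\frac{(Z_t\cdot\omega)^2}{\int_X\omega^2},
\]
so it is enough to prove $Z_t\cdot\omega=O(t)$ as $t\searrow 0$. To obtain this rate, my plan is to invoke local finiteness of Zariski chambers: there exists $\delta>0$ such that the support of $N_t$ is constant for $t\in(0,\delta)$, and hence $Z_t=A+tB$ is affine on this interval. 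By continuity of the positive intersection product, $A=\lim_{t\to 0^+}Z_t=\langle\alpha\rangle$, which vanishes precisely because $\nd(\alpha)=0$. Thus $Z_t=tB$ on $(0,\delta)$, giving $Z_t\cdot\omega=O(t)$, and the Hodge index estimate then yields $\vol(\alpha+t\omega)=O(t^2)=O(t^{n-\nd(\alpha)})$ as required.

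The main technical ingredient behind both cases is the local chamber/affine structure of the divisorial Zariski decomposition on an arbitrary, possibly non-projective, compact K\"ahler surface: this is classical after Bauer-K\"uronya-Szemberg in the projective setting, and is precisely where Boucksom's expertise would be most valuable in the K\"ahler case. If one wished to sidestep a direct appeal to the chamber structure, a natural alternative would be to replace it with Lipschitz continuity of $\beta\mapsto Z(\beta)$ near $[\alpha]$, which suffices to force $\|Z_t\|=\|Z(\alpha+t\omega)-Z(\alpha)\|=O(t)$ in any norm on $H^{1,1}(X,\mathbb R)$ and thereby $Z_t\cdot\omega=O(t)$.
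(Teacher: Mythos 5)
Your strategy is essentially the paper's: both arguments run through the (divisorial) Zariski decomposition $\alpha+t\omega=P_t+N_t$ and the fact that this decomposition is affine in $t$ for small $t$. The paper, however, does not split into cases and does not invoke differentiability of the volume: it shows directly that $N_t\le N$ with coefficients $\nu(\alpha,E_i)-a_i(t)$, $a_i(t)\to 0$ (lower semicontinuity of minimal multiplicities), that the $a_i(t)$ solve the linear system $\sum_i a_i(t)\,E_i\cdot E_j=-t\,\omega\cdot E_j$, hence $a_i(t)=O(t)$ by nondegeneracy of the Gram matrix, and concludes $\vol(\alpha+t\omega)=(P+t\omega)^2+O(t^2)=2(P\cdot\omega)t+(\omega^2)t^2+O(t^2)$. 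This single computation gives $O(t)$ when $\nd(\alpha)=1$ and $O(t^2)$ when $\nd(\alpha)=0$ (since then $P=\langle\alpha\rangle=0$), so it subsumes both of your cases; it also \emph{proves}, rather than cites, the local affine/chamber structure you want to use, which is exactly the point on a possibly non-projective K\"ahler surface where Bauer--K\"uronya--Szemberg is not directly quotable.

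Two soft spots in your write-up. First, in the $\nd(\alpha)=1$ case, ``piecewise polynomial, hence $C^1$'' is not a valid inference (a continuous piecewise polynomial function such as $|x|$ need not be $C^1$); differentiability of $\vol$ on the big cone of a surface is true, but it requires the orthogonality $P\cdot N=0$ and the matching of derivatives across chamber walls, i.e.\ essentially the same computation as above --- and note the paper is careful to list differentiability as known only in the projective case, which is why its surface proof avoids it. Second, in the $\nd(\alpha)=0$ case your appeal to ``local finiteness of Zariski chambers'' and to continuity of $t\mapsto Z_t$ up to $t=0$ is precisely what needs justification near the boundary of the pseudoeffective cone; the paper supplies it via $N_t\le N$ and lower semicontinuity of $\nu(\cdot,E_i)$. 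Your fallback suggestion (Lipschitz dependence of $Z(\cdot)$) would also work and is in effect what the Gram-matrix estimate $a_i(t)=O(t)$ delivers. Once either of these is in place, your Hodge-index step is fine (indeed unnecessary: $Z_t=tB$ gives $Z_t^2=O(t^2)$ directly), so I would call this a correct outline with the key technical lemma left as a citation that, in the stated generality, should instead be proved.
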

\begin{proof}
To see this, let $\alpha=P+N$ and $\alpha+t \omega=P_t+N_t$ be the Zariski decompositions of $\alpha$ and $\alpha + t\omega$, which always exist on surfaces \cite{Zar}. Let $E_i$ be the irreducible components of the non-K\"ahler locus $E_{nK}(\alpha)$ (see \cite{Bo2}), so that:
\[
N = \sum_i \nu(\alpha, E_i) E_i,
\]
where $\nu(\alpha,E_i)$ is the minimal multiplicity of $\alpha$ along $E_i$ \cite[Def. 3.1]{Bo2}. Now it is clear that $N_t\le N$, so we also have:
\[
N_t= \sum_i \nu(\alpha + t\omega, E_i) = \sum_i (\nu(\alpha, E_i) -a_i(t)) E_i
\]
for nonnegative constants $a_i(t)\to 0$, by lower semicontinuity of the minimal multiplicity \cite[Prop. 3.5]{Bo2}. Indeed, for all $i$ we have
$$\nu(\alpha,E_i)\leq\liminf_t \nu(\alpha+t\omega,E_i)\leq \limsup_t\nu(\alpha+t\omega,E_i)\leq \nu(\alpha,E_i),$$
where the last inequality follows immediately from the definition of $\nu$.

It follows now that, for $t$ small enough, $P_t=P+t \omega+\sum a_i(t) E_i$ is orthogonal to each $E_i$, and since $P$ itself is orthogonal to each $E_i$, we may dot both sides against $E_j$ to get that:
\[
\sum_i a_i(t) E_i\cdot E_j=-t \omega\cdot E_j.
\]
By non-degeneracy of the Gram matrix $(E_i\cdot E_j)$, it thus follows that $a_i(t)=O(t)$, and hence
\[
\vol(\alpha+t \omega)=(P_t^2)= \left(P+t \omega+\sum a_i(t) E_i\right)^2 =(P+t \omega)^2+O(t^2),
\]
using again that $P$ is orthogonal to each $E_i$. Finally, note that $P^2 = \vol(\alpha) = 0$ and that (by definition) $P=\langle[\alpha]\rangle$, so that:
\[
(P + t\omega)^2 = 2(P\cdot \omega) t + (\omega^2) t^2 = c t^{2-\nd(\alpha)}+O(t^2)
\]
as desired.
\end{proof}

\section{Numerical Dimension Zero}\label{zero}

In this section, we deal with the case when the class $[\alpha]$ has numerical dimension zero. Recall that having $\nd(\alpha)=0$ is equivalent to having that $[\alpha]= N(\alpha)$,  where here $N(\alpha)$ is the negative part in the divisorial Zariski decomposition of \cite{Bo2} (see also \cite{Nak} for the algebraic case). In particular, $N(\alpha)$ is the cohomology class of some effective $\R$ divisor $D$.
\begin{prop}
Let $X^n$ be a compact K\"ahler manifold, let $[\alpha]=[D]$ be a $(1,1)$-class with $\nd(\alpha)=0$, and let $[\beta]$ be any other $(1,1)$ class. Then there exists a constant $C\geq 0$ such that for all $t > 0$ sufficiently small we have:
\[
\mathrm{vol}([D] + t\beta) = Ct^n.
\]
In particular, Question \ref{con} has an affirmative answer when $\nd(\alpha)=0$.
\end{prop}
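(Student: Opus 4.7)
My plan is to show that for $t>0$ sufficiently small the divisorial Zariski decomposition $[D]+t\beta=[Z_t]+[N_t]$ satisfies $[Z_t]=t[\gamma]$ for a fixed class $[\gamma]$ independent of $t$; once this is known, the homogeneity of $\vol$ immediately yields $\vol([D]+t\beta)=\vol(Z_t)=t^n\vol([\gamma])=Ct^n$.

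The reductions are easy. Since $\nd([\alpha])=0$ forces the positive part in the divisorial Zariski decomposition of $[\alpha]$ to vanish, $[\alpha]=[D]$ for an effective $\mathbb R$-divisor $D=\sum a_iE_i$ with $a_i=\nu([\alpha],E_i)>0$. A Siu decomposition argument further shows that $[D]$ is the unique closed positive $(1,1)$-current in $[\alpha]$: if $T\geq 0$ lies in $[\alpha]$, then $\nu(T,E_i)\geq a_i$ by the definition of the minimal multiplicity, so $T-[D]\geq 0$ is a positive current in the zero class, hence zero. If $[D]+t\beta$ fails to be big for $t>0$ small then $\vol([D]+t\beta)=0$ identically near zero and the statement is trivial with $C=0$, so I may assume bigness on some interval $(0,t_0)$.

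The central step is to prove that the negative part $[N_t]$ depends affinely on $t$: $[N_t]=[D]+t[E^\ast]$ for a fixed $\mathbb R$-divisor class $[E^\ast]$ supported on finitely many prime divisors. Combined with Boucksom's continuity of the divisorial Zariski decomposition (which already yields $[N_t]\to[D]$ as $t\to 0^+$), this asserts that the correction to $[D]$ is \emph{exactly} linear in $t$. Granted this, $[Z_t]=[D]+t\beta-[N_t]=t([\beta]-[E^\ast])$ is $t$ times the fixed class $[\gamma]:=[\beta]-[E^\ast]$ and the conclusion follows.

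The main obstacle is establishing the exact affine dependence of the multiplicities $\nu([D]+t\beta,E)$ on $t$. On surfaces (Proposition~\ref{surfaces}) this emerges from solving the linear system $\sum_i a_i(t)\,E_i\cdot E_j=-t\,\omega\cdot E_j$, whose non-degeneracy is ensured by the Hodge index theorem applied to the exceptional components. In higher dimensions the positive part $[Z_t]$ is only nef in codimension one, so the orthogonality $[Z_t]\cdot[E_j]=0$ need not hold against arbitrary test classes and the direct linear algebra argument is unavailable. A natural route is to pass to a log resolution $\pi:Y\to X$ of $\mathrm{supp}(D)$, on which the relevant prime divisors form an SNC configuration, and extract a surface-type linear system from the pairing of the exceptional components inside an appropriate subspace of $H^{1,1}(Y,\mathbb R)$. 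Alternatively, one could exploit the rigidity of $[D]$ as the unique positive current in $[\alpha]$---a feature distinctive to $\nd=0$---to pin down the Siu decomposition of the minimal-singularities current in $[D]+t\beta$ and force its coefficients along the $E_i$ to depend affinely on $t$.
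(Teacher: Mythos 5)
Your overall strategy (show that the positive part of the divisorial Zariski decomposition of $[D]+t\beta$ is exactly $t[\gamma]$ for a fixed class $[\gamma]$, then invoke homogeneity and $\vol = \vol\circ Z$) would indeed yield the statement, and it is in fact closely related to what is true here. But the proof has a genuine gap: the central step --- that the negative part $[N_t]$ depends \emph{affinely} on $t$, equivalently that the minimal multiplicities $\nu([D]+t\beta,E)$ are affine in $t$ near $0$ --- is never proved. You correctly identify it as the main obstacle and then only gesture at two possible routes, neither of which is carried out. The first route (a surface-type linear system on a log resolution) does not obviously exist: in dimension $\ge 3$ the positive part is only modified nef, there is no orthogonality relation $[Z_t]\cdot[E_j]=0$ against the components of $N_t$, and no Gram-matrix argument is available. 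The second route (rigidity of $[D]$ as the unique positive current in $[\alpha]$) concerns the unperturbed class only; the perturbed class $[D]+t\beta$ is big and contains many positive currents, so the rigidity of $[\alpha]$ does not by itself control the Siu decomposition of the minimal-singularity currents in $[D]+t\beta$. Note also that affine dependence of $\nu([D]+t\beta,E)$ on $t$ is emphatically \emph{not} a general feature of Zariski decompositions: in Lesieutre's example \cite{Les} (with $\nd(\alpha)=1$) the relevant multiplicities behave like $\sqrt{t}$, which is exactly why $\vol(\alpha+t\omega)\sim t^{3/2}$ there. So the hypothesis $\nd(\alpha)=0$ must enter in an essential way that your sketch does not pin down.

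For comparison, the paper's proof sidesteps the Zariski decomposition of the perturbed class entirely. Since $\nd(\alpha)=0$ forces $[\alpha]=N(\alpha)=[D]$ with $\mathrm{Supp}(D)=E_{nn}(D)=\mathbb B_-(D)$, and since $E_{nn}(D)=\bigcup_{t>0}E_{nK}([D]+t\omega)$ with the loci $E_{nK}([D]+t\omega)$ decreasing in $t$, a stabilization argument gives $\mathrm{Supp}(D)\subseteq E_{nK}([D]+t\beta)$ for all small $t$. One then applies \cite[Theorem 3.7]{DFT}, which says that adding an effective divisor supported in the non-K\"ahler locus of a big class does not change the volume; this yields that $\vol(\tfrac{1}{t}[D]+\beta)$ is constant in $t$, and homogeneity finishes the proof. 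If you want to rescue your approach, the cleanest fix is to use that same theorem of \cite{DFT} (which also controls the positive parts) to deduce the affine dependence you need, but at that point you have essentially reproduced the paper's argument.
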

\begin{proof}
If $D+t\beta$ is not big for all small $t > 0$, then we may simply set $C = 0$. Otherwise, we may assume that $D + t\beta$ is big for all sufficiently small $t > 0$. Note now that the proposition is equivalent to asking that $\vol(\frac{1}{t}[D] + \beta) = C$ for all $t$ sufficiently small. 

To this end, suppose we show that
\begin{equation}\label{goal}
\mathrm{Supp}(D)\subseteq E_{nK}([D] + t\beta) = E_{nK}\left(\frac{1}{t}[D] + \beta\right),
\end{equation}
for all $0<t\leq t_0$ sufficiently small. Then for all $0<t<t_0$ we can apply \cite[Theorem 3.7]{DFT} to the class $\frac{1}{t_0}[D] + \beta$ and conclude that
$$\vol\left(\frac{1}{t}[D] + \beta\right)=\vol\left(\frac{1}{t_0}[D] + \beta+\left(\frac{1}{t}-\frac{1}{t_0}\right)[D]\right)=\vol\left(\frac{1}{t_0}[D] + \beta\right),$$
which is indeed constant as $t$ varies.

To prove \eqref{goal}, we shall first deal with the case when $\beta = \omega$ is a K\"ahler class. It follows from \cite[Def. 3.7]{Bo2} that, if $D$ is a divisor with $\nd(D)=0$, then we have,
\[
D = N(D) = \sum_i \nu(D, E_i) E_i,
\]
where the $E_i$ are the irreducible components of $D$. Thus, by \cite[Def. 3.3]{Bo2}, we must have:
\[
\mathrm{Supp}(D) = E_{nn}(D) = \mathbb B_-(D),
\]
where $E_{nn}(D)$ is the non-nef locus \cite{Bo2}, which equals the diminished base locus $\mathbb B_-(D)$ of \cite{ELMNP}. Now, it is well-known (cf. \cite{ELMNP}) that we can also characterize the non-nef locus as: $$E_{nn}(D) = \bigcup_{t > 0} E_{nK}([D] + t\omega).$$ It is then easy to see that the subvarieties $E_{nK}([D] + t\omega)$ are decreasing in $t$, and since their union is the proper subvariety $\mathrm{Supp}(D)$, they must stabilize as $t$ goes to zero at some $t_0$; i.e., for all $t \leq t_0$, we actually have $$E_{nK}([D] + t\omega) = E_{nn}(D) = \mathrm{Supp}(D),$$ which was to be shown.

When now $\beta$ is arbitrary, we simply choose a large enough K\"ahler class $\omega$ so that $\beta + \omega$ is also K\"ahler. Then for any $t > 0$, $$E_{nK}\left(\frac{1}{t}[D] + \beta + \omega\right)\subseteq E_{nK}\left(\frac{1}{t}[D] + \beta\right),$$ and so for $t \leq t_0$ as above, we have $\mathrm{Supp}(D)\subseteq E_{nK}([D] + t\beta)$, as was to be shown.
\end{proof}

\section{Concluding questions}\label{end}
As remarked earlier, despite the positive results in Theorem \ref{main}, the answer to Question \ref{con} is negative in general, thanks to a very recent counterexample of Lesieutre \cite{Les}. Note that his example has $n=3,\nd(\alpha)=1$, which is the first case which is not covered by Theorem \ref{main}.

One can however ask then the following question:

\begin{que}[Lesieutre \cite{Les}]\label{aaa}
Let $(X^n,\omega)$ be a compact K\"ahler manifold and $[\alpha]$ a pseudoeffective $(1,1)$ class. Does there exist a positive {\bf real} number $\nd_{\mathrm{vol}}(\alpha)$ such that
$$ct^{n - \nd_{\mathrm{vol}}(\alpha)}\leq \vol(\alpha+t\omega)\leq Ct^{n - \nd_{\mathrm{vol}}(\alpha)},$$
 for some $c, C>0$ and for all $t\geq0$ sufficiently small?
\end{que}

It is immediate that, if $\nd_{\mathrm{vol}}(\alpha)$ exists, we would have $\nd(\alpha) \leq \nd_{\mathrm{vol}}(\alpha) \leq \nd(\alpha) + 1$, and so it would be intermediate amongst the various different notions of numerical dimension. All known examples, including those in \cite{Les}, admit such a number.

A possible intermediate step in answering Question \ref{aaa}, also suggested by Lesieutre, would be the following:
\begin{que}[Lesieutre \cite{Les}]\label{aaaa}
Let $(X^n,\omega)$ be a compact K\"ahler manifold and $[\alpha]$ a pseudoeffective $(1,1)$ class.
Then the limit
\[
\lim_{t\searrow 0} \frac{\log(\vol(\alpha + t\omega))}{\log(t)},
\]
exists.
\end{que}
This is generally weaker than Question \ref{aaa}, but the limit would compute $\nd_{\mathrm{vol}}(\alpha)$, if it did exist. Note that one cannot simply use log-concavity of the volume, as it is not true in general that any two concave functions can only intersect a finite number of times on a compact interval.

\end{document}